\newtheorem{thm}{Theorem}
\newtheorem{proposition}[thm]{Proposition}
\newtheorem{lemma}[thm]{Lemma}
\newtheorem{fact}[thm]{Fact}
\newtheorem{cor}[thm]{Corollary}
\newtheorem{question}{Question}
\theoremstyle{definition}
\newtheorem{definition}[thm]{Definition}
\newtheorem{remark}[thm]{Remark}
\theoremstyle{remark}
\newtheorem{claim}{Claim}
\def\Lip{\operatorname{Lip}}
\def\er{\mathbb R}
\def\dist{\operatorname{dist}}
\def\setsep{;\;}
\def\F{\mathcal F}
\def\en{\mathbb N}
\def\zet{\mathbb Z}
\def \sspan {\operatorname{span}}
\def \Br {\operatorname{Br}}
\begin{document}
\title{Lipschitz-free spaces over ultrametric spaces}
\author{Marek C\' uth}
\author{Michal Doucha}
\email{marek.cuth@gmail.com, m.doucha@post.cz}
\address[M.~C\' uth, M.~Doucha]{Instytut Matematyczny Polskiej Akademii Nauk, \' Sniadeckich 8, 00-656 Warszawa, Poland}
\subjclass[2010]{46B03, 46B15, 54E35}

\keywords{Lipschitz-free space, ultrametric space, Schauder basis}
\begin{abstract}We prove that the Lipschitz-free space over a separable ultrametric space has a monotone Schauder basis and is isomorphic to $\ell_1$. This extends results of A. Dalet using an alternative approach.
\end{abstract}
\maketitle
\section*{Introduction}

Let $(M,d,0)$ be a pointed metric space, that is, a metric space equipped with a distinguished
point denoted by 0. To such a space we can associate the space $\Lip_0(M)$ of all real-valued Lipschitz functions $f$ on M which satisfy $f(0) = 0$, endowed with the norm $\|\cdot\|_{Lip}$ defined by the Lipschitz constant, i.e.
$$\|f\|_{Lip} : = \sup\left\{\frac{|f(x) - f(y)|}{d(x,y)}\setsep x,y\in M,\;x\neq y\right\}.$$
It is readily checked to be a Banach space.

The Dirac map $\delta_M:M\to \Lip_0(M)^*$ defined by $\delta_M(x)(f) = f(x)$ for $x\in M$ and $f\in \Lip_0(M)$ is an isometric embedding from $M$ into $\Lip_0(M)^*$. The \emph{Lipschitz-free space over $M$}, denoted by $\F(M)$, is the closed linear hull of $\delta_M(M)$ in $\Lip_0(M)^*$, i.e. $\F(M): = \overline{\sspan}\{\delta_M(x)\setsep x\in M\}$. It is known that its dual space is isometrically isomorphic to $\Lip_0(M)$. We refer to \cite{weaver} and \cite{godKal} for an introduction to Lipschitz-free spaces and its basic properties.

The study of the linear structure of Lipschitz-free spaces over metric spaces has become an active field of study, see e.g. \cite{godard, godKal, lanPer, hajPer, godOza, aude1, aude2}. In this note we are interested in the structure of Lipschitz-free spaces over ultrametric spaces. Let us recall that a  metric space $(M, d)$ is said to be \emph{ultrametric} if for every $x, y, z \in M$, we have $d(x, z) \leq \max\{d(x, y), d(y, z)\}$.

A. Dalet \cite{aude2} proved, among other things, that the Lipschitz-free space over a separable proper ultrametric space has the metric approximation property and is isomorphic to $\ell_1$. We improve the result and show the following.

\begin{thm}\label{t:schauderUltrametric}The Lipschitz-free space over a separable ultrametric space has a monotone Schauder basis.
\end{thm}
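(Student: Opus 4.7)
The plan is to construct a nested sequence of norm-one, finite-rank projections on $\F(M)$ converging strongly to the identity, by lifting nearest-point retractions of $M$ onto an increasing chain of finite dense subsets; a monotone basis is then read off from consecutive projections.

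Fix a dense sequence $\{x_n\}_{n\ge 0}$ in $M$ with $x_0=0$, set $M_n:=\{x_0,\dots,x_n\}$, and define $\pi_n\colon M\to M_n$ by letting $\pi_n(x)$ be the point of $M_n$ realising $\dist(x,M_n)$ whose index is smallest; this tie-breaking rule will be essential. Clearly $\pi_n$ fixes $M_n$ pointwise.

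The main step is to check that $\pi_n$ is $1$-Lipschitz. Given $x,y\in M$, set $z_x=\pi_n(x)$, $z_y=\pi_n(y)$ and assume $d(x,z_x)\le d(y,z_y)$. If $d(x,z_x)\le d(x,y)$, three applications of the ultrametric inequality give $d(y,z_x)\le d(x,y)$, hence $d(y,z_y)\le d(x,y)$, and hence $d(z_x,z_y)\le d(x,y)$. If instead $d(x,z_x)>d(x,y)$, the isosceles property of ultrametric spaces forces $d(y,z_x)=d(x,z_x)$, from which $d(y,z_y)=d(x,z_x)$ and then $d(x,z_y)=d(x,z_x)$ follow; that is, both $z_x$ and $z_y$ simultaneously realise $\dist(x,M_n)$ and $\dist(y,M_n)$, so the smallest-index rule yields $z_x=z_y$. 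Next, I would verify the compatibility $\pi_n\circ\pi_{n+1}=\pi_n$: if $\pi_{n+1}(x)\in M_n$, minimality and the tie-breaking rule give $\pi_{n+1}(x)=\pi_n(x)$; otherwise $\pi_{n+1}(x)=x_{n+1}$, and the tie-breaking rule forces $d(x,x_{n+1})<d(x,x_i)$ for every $i\le n$, so the isosceles property yields $d(x_{n+1},x_i)=d(x,x_i)$ for all such $i$, making $x$ and $x_{n+1}$ have identical distances to $M_n$ and hence the same nearest point.

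Each $\pi_n$ then extends by Lipschitz functoriality to a norm-one linear operator $P_n\colon\F(M)\to\F(M)$ with $P_n\delta_x=\delta_{\pi_n(x)}$, which is a projection onto $\F(M_n)=\sspan\{\delta_{x_1},\dots,\delta_{x_n}\}$. The compatibility $\pi_n\circ\pi_{n+1}=\pi_n$ translates to $P_nP_{n+1}=P_n$, so the ranges are nested. Density of $\{x_n\}$ in $M$ makes $\bigcup_n\F(M_n)$ dense in $\F(M)$, and together with $\|P_n\|=1$ a routine three-$\varepsilon$ argument yields $P_nu\to u$ for every $u\in\F(M)$. Setting $e_n:=\delta_{x_n}-\delta_{\pi_{n-1}(x_n)}\in\F(M_n)\cap\ker P_{n-1}$ then produces the desired monotone Schauder basis $(e_n)_{n\ge 1}$ of $\F(M)$. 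The main obstacle is the $1$-Lipschitz estimate for $\pi_n$: without the smallest-index tie-breaking rule the nearest-point map need not be Lipschitz, and it is precisely the isosceles property of ultrametric spaces that lets the potentially bad case collapse to equality; everything else is standard Banach-space bookkeeping.
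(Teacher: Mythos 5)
Your proposal is correct and follows essentially the same route as the paper: the nearest-point retraction onto an increasing chain of finite subsets with smallest-index tie-breaking, the $1$-Lipschitz estimate via the isosceles property of ultrametrics, the compatibility $\pi_n\circ\pi_{n+1}=\pi_n$, and the lift to commuting norm-one projections yielding a monotone basis. The only differences are cosmetic (your case split conditions on $d(x,\pi_n(x))$ versus $d(x,y)$, while the paper isolates the implication $d(x,y)<\dist(x,S_n)\Rightarrow \pi_n(x)=\pi_n(y)$ as a separate claim and reuses it in both verifications).
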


\begin{thm}\label{t:freeUltrametric}The Lipschitz-free space over a separable ultrametric space is isomorphic to $\ell_1$.\end{thm}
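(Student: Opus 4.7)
My plan is to show that the monotone Schauder basis $(e_n)$ supplied by Theorem~\ref{t:schauderUltrametric} is not merely a basis but is in fact equivalent to the canonical unit vector basis of $\ell_1$; the isomorphism $\F(M)\cong\ell_1$ then follows at once. The upper estimate $\|\sum_n a_n e_n\|\le \sum_n|a_n|\,\|e_n\|$ is automatic, so everything reduces to producing a constant $c>0$ such that
\[
c\sum_{n=1}^N |a_n|\,\|e_n\| \;\le\; \Big\|\sum_{n=1}^N a_n e_n\Big\|
\]
holds for every finite choice of scalars, with $c$ independent of $N$.

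To obtain the lower bound, for a given finite choice $(a_n)$ I would produce an explicit test function $f\in \Lip_0(M)$ of Lipschitz norm at most some absolute constant such that $\langle f,\sum_n a_n e_n\rangle\ge c\sum_n|a_n|\,\|e_n\|$. Presumably the construction in the proof of Theorem~\ref{t:schauderUltrametric} presents each normalized basis vector $e_n$ as an elementary molecule $m_{x_n,y_n}:=(\delta_M(x_n)-\delta_M(y_n))/d(x_n,y_n)$ associated to a pair $(x_n,y_n)$ read off the canonical tree of ultrametric balls of $M$. The natural candidate for $f$ is then a signed linear combination of characteristic functions of balls from that tree,
\[
 f \;=\; \sum_n \varepsilon_n\, r_n\, \chi_{B_n},
\]
where $\varepsilon_n=\operatorname{sgn}(a_n)$, $B_n$ is an ultrametric ball of radius comparable to $d(x_n,y_n)$ that contains $x_n$ but not $y_n$, and $r_n$ is a scaling chosen so that $\langle r_n\chi_{B_n},m_{x_n,y_n}\rangle\approx 1$.

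The structural feature of ultrametric spaces that makes this approach viable is the nested-or-disjoint dichotomy: any two balls in $M$ are either disjoint or one contains the other. Consequently $\chi_B$ is genuinely Lipschitz for every ball $B$ (with constant at most the reciprocal of its radius), and, crucially, the Lipschitz constant of $\sum_n \varepsilon_n r_n\chi_{B_n}$ indexed by balls from a common tree can be estimated locally, since for any two points $x,y\in M$ only balls lying on the tree-path between $x$ and $y$ contribute to $f(x)-f(y)$. The main technical hurdle is to organize the estimate so that the Lipschitz norm of $f$ is bounded independently of $N$ while simultaneously controlling the cross-pairings $\langle f,e_m\rangle$ for $m\ne n$: ideally, by picking the $B_n$ coherently with the tree underlying the basis construction, those cross-pairings vanish outright, so that only the diagonal terms $\varepsilon_n a_n\ge 0$ survive and deliver the required lower bound.
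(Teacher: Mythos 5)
Your strategy -- showing that the retractional basis of Theorem~\ref{t:schauderUltrametric} is equivalent to the unit vector basis of $\ell_1$ -- is genuinely different from the paper's route (which embeds a rescaled copy of $M$ into an $\er$-tree, invokes Godard's theorem to land in $\ell_1$ isometrically, and finishes with Pe{\l}czy\'nski's complemented-subspace theorem), and it is in principle viable. But as stated it has a genuine gap, and in fact the lower estimate you aim for is \emph{false} for the basis of $\F(M)$ over a general ultrametric $M$. The obstruction is exactly the one you flag as ``the main technical hurdle'': the Lipschitz norm of $f=\sum_n\varepsilon_n r_n\chi_{B_n}$ is controlled by sums of the form $\sum r_n$ taken over the chain of nested balls $B_n$ separating two given points $x,y$, and since $r_n$ must be comparable to the radius of $B_n$ to normalize the diagonal pairing, this sum is only comparable to $d(x,y)$ when the radii along a chain decay geometrically. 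In a general ultrametric space they need not. Concretely, take $M=\{0,x_1,\dots,x_N\}$ with $d(0,x_j)=1$ and $d(x_i,x_j)=1-\min\{i,j\}/(2N)$ for $i\neq j$ (an ultrametric), enumerated as $s_1=0$, $s_{j+1}=x_j$. The construction in the proof of Theorem~\ref{t:schauderUltrametric} then yields $e_{j+1}=\delta_M(x_j)-\delta_M(x_{j-1})$ with $\|e_{j+1}\|=1-(j-1)/(2N)\geq 1/2$, while $\sum_{j=2}^{N}e_{j+1}=\delta_M(x_N)-\delta_M(x_1)$ has norm at most $1$; so $\big\|\sum a_ne_n\big\|\leq 1$ against $\sum|a_n|\|e_n\|\geq(N-1)/2$, and no uniform constant $c$ exists. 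No choice of test function can repair this, since the inequality itself fails.

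The fix is precisely the paper's first reduction (Fact~\ref{f:distances2}): replace $d$ by a $2$-bi-Lipschitz equivalent ultrametric taking only values $2^n$, $n\in\zet$. After this normalization your plan does go through: taking $e_n=\delta_M(s_n)-\delta_M(s_{k_n})$ with $s_{k_n}$ the closest earlier point and $B_n=\{z\setsep d(z,s_n)<d(s_n,s_{k_n})\}$, one checks (using the nested-or-disjoint dichotomy and the minimality in the choice of $k_n$) that the balls containing a fixed point have pairwise distinct radii and that $\langle \chi_{B_m},e_n\rangle=0$ for $m\neq n$, so the cross-pairings really do vanish; and since the radii along any chain are distinct powers of $2$, the separating sums are bounded by $2d(x,y)$, giving $\|f\|_{Lip}\leq 4$ and hence the lower bound with $c=1/4$. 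Completed in this way your argument is arguably more elementary than the paper's (it avoids $\er$-trees, Godard's theorem and Pe{\l}czy\'nski's decomposition, and exhibits an explicit $\ell_1$-basis), but the $2^n$-valued rescaling step is not optional -- without it the key lemma you propose is simply not true.
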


\noindent The improvement is that we do not assume the ultrametric space to be proper. Moreover, in Theorem \ref{t:schauderUltrametric} we get a stronger conclusion. Our proofs do not follow the lines of the proofs from \cite{aude2} and so they can be viewed as an alternative approach to the the above mentioned results of A. Dalet as well. In the final section we collect few corollaries of our results and suggest some open problems.

Before coming to the proofs, let us recall some basic results. One of the main properties of the Lipschitz-free spaces is the following universality property that provides a connection between the Lipschitz maps in metric spaces and linear maps in Banach spaces; see \cite[Lemma 2.5]{godKal}.

\begin{lemma}\label{l:basicFreeSpace}Let $M$ be a pointed metric space and $X$ a Banach space and suppose $L: M\to X$ is a Lipschitz map such that $L(0_M) = 0$. Then there exists a unique linear map $\widehat{L}:\F(M)\to X$ extending $L$, i.e. the following diagram commutes:
\begin{center}\begin{tikzpicture}
  \matrix (m) [matrix of math nodes,row sep=3em,column sep=4em,minimum width=2em]
  {
    M & X \\
   \F(M) & X \\};
  \path[-stealth]
    (m-1-1) edge node [left] {$\delta_M$} (m-2-1)
            edge node [above] {$L$} (m-1-2)
    (m-2-1.east|-m-2-2) edge [dashed] node [above] {$\widehat{L}$} (m-2-2)
    (m-1-2) edge node [right] {$\mathrm{id}_X$} (m-2-2);    
\end{tikzpicture}\end{center}
and $\|\widehat{L}\| = \|L\|_{Lip}$ where $\|\cdot\|_{Lip}$ denotes the Lipschitz norm of $L$.

\end{lemma}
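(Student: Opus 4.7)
The plan is to construct $\widehat{L}$ first on the dense subspace $\sspan\{\delta_M(x):x\in M\}\subseteq\F(M)$ by the only possible formula dictated by linearity, namely
$$\widehat{L}\Bigl(\sum_{i=1}^n a_i\,\delta_M(x_i)\Bigr) := \sum_{i=1}^n a_i\,L(x_i),$$
and then to extend by continuity. The whole argument hinges on the inequality
$$\Bigl\|\sum_{i=1}^n a_i\,L(x_i)\Bigr\|_X \;\le\; \|L\|_{Lip}\cdot\Bigl\|\sum_{i=1}^n a_i\,\delta_M(x_i)\Bigr\|_{\F(M)}, \qquad (\ast)$$
which simultaneously proves that the formula is well defined (take two representations of the same element and subtract) and that the resulting operator has norm at most $\|L\|_{Lip}$.

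To establish $(\ast)$, I would dualize against $X^*$. Fix $x^*\in X^*$ with $\|x^*\|\le1$ and let $f := x^*\circ L$. Then $f(0_M)=0$ and $f$ is Lipschitz with $\|f\|_{Lip}\le\|x^*\|\cdot\|L\|_{Lip}\le\|L\|_{Lip}$, so $f\in\Lip_0(M)$. Using that the duality $\langle\delta_M(x),g\rangle = g(x)$ makes $\Lip_0(M)$ act on $\F(M)$ with operator norm equal to the Lipschitz norm, we compute
$$x^*\Bigl(\sum a_i L(x_i)\Bigr) = \sum a_i f(x_i) = \Bigl\langle \sum a_i\delta_M(x_i),\,f\Bigr\rangle \;\le\; \|f\|_{Lip}\cdot \Bigl\|\sum a_i\delta_M(x_i)\Bigr\|_{\F(M)}.$$
Taking the supremum over $x^*$ in the unit ball of $X^*$ (Hahn--Banach) yields $(\ast)$.

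With $(\ast)$ in hand, $\widehat{L}$ extends uniquely by continuity to a bounded linear operator on all of $\F(M) = \overline{\sspan}\,\delta_M(M)$ with $\|\widehat{L}\|\le\|L\|_{Lip}$, and commutativity of the diagram is immediate from the definition on Dirac masses. Uniqueness of $\widehat{L}$ follows because any two linear extensions coincide on the dense set $\{\delta_M(x):x\in M\}$. For the matching lower bound $\|\widehat{L}\|\ge\|L\|_{Lip}$, I would use that $\delta_M$ is an isometric embedding, so $\|\delta_M(x)-\delta_M(y)\|_{\F(M)}=d(x,y)$, whence
$$\|\widehat{L}\| \;\ge\; \sup_{x\ne y}\frac{\|L(x)-L(y)\|_X}{\|\delta_M(x)-\delta_M(y)\|_{\F(M)}} \;=\; \|L\|_{Lip}.$$

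The main (and really only) obstacle is showing that the naive extension formula is well defined, i.e.\ that $\sum a_i\delta_M(x_i)=0$ in $\F(M)$ forces $\sum a_i L(x_i)=0$ in $X$; the duality argument above resolves this in one stroke by reducing to the known identification $\F(M)^*=\Lip_0(M)$, which is why the statement follows immediately once that identification is accepted.
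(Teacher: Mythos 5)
Your proof is correct. Note that the paper does not write out a proof of this lemma at all: it cites \cite[Lemma 2.5]{godKal} and remarks that one may argue as in \cite[Lemma 2.2]{godKal} with $\Lip_0(Y)$ replaced by $X^*$, i.e., observe that the composition operator $x^*\mapsto x^*\circ L$ maps $X^*$ into $\Lip_0(M)=\F(M)^*$ with norm at most $\|L\|_{Lip}$ and is weak$^*$-to-weak$^*$ continuous, hence is the adjoint of an operator $\widehat{L}\colon\F(M)\to X$. Your argument is the same duality in a more elementary packaging: you use exactly the same functionals $f=x^*\circ L$ to prove the estimate $(\ast)$ on the dense subspace $\sspan\{\delta_M(x)\setsep x\in M\}$ and then extend by continuity, which settles well-definedness and $\|\widehat{L}\|\le\|L\|_{Lip}$ in one stroke and avoids any appeal to weak$^*$ topologies; the lower bound via the isometry $\delta_M$ is also the standard one. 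Both routes hinge on the identification $\F(M)^*=\Lip_0(M)$ and the bound $\|x^*\circ L\|_{Lip}\le\|x^*\|\,\|L\|_{Lip}$, so I would call them essentially the same proof. The only cosmetic caveat is in your last step: uniqueness is uniqueness among \emph{bounded} linear maps (which is what the norm identity presupposes), since agreement on $\delta_M(M)$ propagates to $\sspan\,\delta_M(M)$ by linearity and then to all of $\F(M)$ only via continuity.
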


\noindent Note that Lemma 2.5 in \cite{godKal} is formulated only for the case when $M$ is a Banach space; however, a similar proof works also in the more general setting of Lemma \ref{l:basicFreeSpace}. Moreover, it is possible to prove Lemma \ref{l:basicFreeSpace} directly in a similar way as \cite[Lemma 2.2]{godKal} - it is enough to replace $\Lip_0(Y)$ by $Y^*$ in its proof.

Note that it is straightforward to check that for Lipschitz maps $L:M\to N\subset \F(N)$ and S:$N\to P\subset\F(P)$ with $L(0_M)=0_N$ and $S(0_N) = 0_P$ we have $\widehat{SL} = \widehat{S}\widehat{L}$. Hence, as an immediate consequence of Lemma \ref{l:basicFreeSpace} we get the following facts, which we will use later.

\begin{fact}\label{f:staciRetrakce}Let $(M,d)$ be a metric space, $K > 0$ and $A\subset M$ be a $K$-Lipschitz retract of $M$. Then there exists a norm-$K$ projection from $\F(K)$ onto $\F(A)$; i.e. $\F(A)$ is a $K$-complemented subspace of $\F(M)$.
\end{fact}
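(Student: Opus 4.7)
The plan is a direct application of Lemma \ref{l:basicFreeSpace} together with the functoriality observation immediately preceding the fact. Let $r \colon M \to A$ be a $K$-Lipschitz retraction, so that $r\restriction_A = \mathrm{id}_A$. I may assume without loss of generality that the basepoint $0_M$ lies in $A$ (otherwise pick any point of $A$ and relabel it as the basepoint; this does not affect the isomorphic type of $\F(M)$ or $\F(A)$). Write $\iota \colon A \hookrightarrow M$ for the inclusion, which is $1$-Lipschitz and basepoint-preserving, and note that $r$ is basepoint-preserving as well.

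Applying Lemma \ref{l:basicFreeSpace} to the $K$-Lipschitz map $\delta_A \circ r \colon M \to \F(A)$ yields a bounded linear operator $\widehat{r} \colon \F(M) \to \F(A)$ with $\|\widehat{r}\| \le K$, and applying it to $\delta_M \circ \iota \colon A \to \F(M)$ yields $\widehat{\iota} \colon \F(A) \to \F(M)$ with $\|\widehat{\iota}\| \le 1$. Define
$$ P := \widehat{\iota} \circ \widehat{r} \colon \F(M) \to \F(M). $$
Then $\|P\| \le \|\widehat{\iota}\|\cdot\|\widehat{r}\| \le K$, and the range of $P$ is contained in $\widehat{\iota}(\F(A))$, which we identify with $\F(A)$ as a subspace of $\F(M)$ (this identification being isometric since $\widehat{\iota}$ is, as a standard consequence of the universal property).

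To see that $P$ is indeed a projection, I invoke the functoriality noted before the fact: since $r \circ \iota = \mathrm{id}_A$ and the correspondence $L \mapsto \widehat{L}$ is multiplicative, we have
$$ \widehat{r} \circ \widehat{\iota} = \widehat{r \circ \iota} = \widehat{\mathrm{id}_A} = \mathrm{id}_{\F(A)}. $$
Hence $P^2 = \widehat{\iota}\,\widehat{r}\,\widehat{\iota}\,\widehat{r} = \widehat{\iota}\,\widehat{r} = P$, and moreover $P$ fixes every element of $\F(A) \subset \F(M)$, so its image is exactly $\F(A)$. This gives a norm-$K$ projection of $\F(M)$ onto $\F(A)$, as required.

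There is no substantial obstacle here; the content of the argument is packaged entirely inside Lemma \ref{l:basicFreeSpace} and the remark on composition. The only point one must be a little careful about is the choice of basepoint, which is why the initial reduction to the case $0_M \in A$ is mentioned explicitly.
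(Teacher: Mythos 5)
Your argument is correct and is precisely the one the paper intends: the Fact is stated as an immediate consequence of Lemma \ref{l:basicFreeSpace} together with the multiplicativity remark $\widehat{SL}=\widehat{S}\widehat{L}$, and you have simply written out the details (including the harmless basepoint normalization). No issues.
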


\begin{fact}\label{f:isomorphism}Let $M, N$ be $K$-bi-Lipschitz equivalent metric spaces for some $K > 0$. Then $\F(M)$ is $K$-isomorphic to $\F(N)$.
\end{fact}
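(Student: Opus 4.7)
The plan is to produce the isomorphism $\mathcal{F}(M) \to \mathcal{F}(N)$ directly from the bi-Lipschitz bijection $L \colon M \to N$ by applying the universal property in Lemma~\ref{l:basicFreeSpace} in both directions. First I would fix a base point $0_M \in M$ and take $0_N := L(0_M)$ as the base point of $N$; with this choice both $L$ and $L^{-1}$ preserve base points. Composing them with the Dirac embeddings gives Lipschitz maps $\delta_N \circ L \colon M \to \mathcal{F}(N)$ and $\delta_M \circ L^{-1} \colon N \to \mathcal{F}(M)$ that send the distinguished point to $0$, so Lemma~\ref{l:basicFreeSpace} yields linear extensions $\widehat{L} \colon \mathcal{F}(M) \to \mathcal{F}(N)$ and $\widehat{L^{-1}} \colon \mathcal{F}(N) \to \mathcal{F}(M)$ with $\|\widehat{L}\| = \|L\|_{Lip}$ and $\|\widehat{L^{-1}}\| = \|L^{-1}\|_{Lip}$.

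Second, I would invoke the compositional identity $\widehat{SL} = \widehat{S}\,\widehat{L}$ recorded in the paragraph above Fact~\ref{f:staciRetrakce}. Taking $S = L^{-1}$ gives $\widehat{L^{-1}}\,\widehat{L} = \widehat{L^{-1}L} = \widehat{\mathrm{id}_M}$, and since the identity on $\mathcal{F}(M)$ is itself a linear extension of $\delta_M \circ \mathrm{id}_M$, the uniqueness clause in Lemma~\ref{l:basicFreeSpace} forces $\widehat{\mathrm{id}_M} = \mathrm{id}_{\mathcal{F}(M)}$. Symmetrically $\widehat{L}\,\widehat{L^{-1}} = \mathrm{id}_{\mathcal{F}(N)}$, so $\widehat{L}$ is a linear bijection with inverse $\widehat{L^{-1}}$.

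Third, for the quantitative estimate, I read $K$-bi-Lipschitz equivalence as the existence of a bijection with $\|L\|_{Lip}\cdot \|L^{-1}\|_{Lip} \le K$. Combined with the two norm identities from the first step this gives $\|\widehat{L}\|\cdot \|\widehat{L}^{-1}\| \le K$, which is exactly the asserted $K$-isomorphism of $\mathcal{F}(M)$ and $\mathcal{F}(N)$.

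I do not anticipate a genuine obstacle here: the statement is a purely formal consequence of the universal property together with the functoriality of the construction $L \mapsto \widehat{L}$. The single point requiring care is the compatible choice of base points, namely $0_N = L(0_M)$, without which $L$ would not be a base-point-preserving map and Lemma~\ref{l:basicFreeSpace} could not be applied in the required form.
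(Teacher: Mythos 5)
Your proposal is correct and is essentially the paper's own argument: the paper gives no separate proof of this Fact, stating it as an immediate consequence of Lemma~\ref{l:basicFreeSpace} together with the composition identity $\widehat{SL}=\widehat{S}\widehat{L}$, which is exactly what you spell out. The only interpretive point is the precise meaning of ``$K$-bi-Lipschitz'' (whether $K$ bounds the product $\|L\|_{Lip}\|L^{-1}\|_{Lip}$ or each factor separately), but your argument yields the corresponding bound on $\|\widehat{L}\|\,\|\widehat{L}^{-1}\|$ under either convention.
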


\noindent Let $(M,d)$ be an ultrametric space. We will often use the following property of ultrametric spaces which is easy to prove. For $x,y,z\in M$, if $d(x,y)\neq d(y,z)$ then $d(x,z) = \max\{d(x,y),d(y,z)\}$.

\section{Monotone Schauder basis}\label{s:monSchauder}

The purpose of this section is to prove Theorem \ref{t:schauderUltrametric}.

\begin{lemma}\label{l:sufficientSchauderBasis}Let $(M,d)$ be a separable pointed metric space. Let $(s_n)_{n\in\en}$ be a one-to-one sequence of points from $M$ with $0_M = s_1$ and $\overline{\{s_n\setsep n\in\en\}} = M$. Let there exist a system of retractions $(r_n)_{n\in\en}$ such that, for every $n\in\en$, we have
	\begin{enumerate}[\upshape (i)]
		\item $r_n$ is a 1-Lipschitz retraction with $r_n(M) = \{s_k\setsep k\leq n\}$, and
		\item $r_n\circ r_{n+1} = r_n$.
	\end{enumerate}
Then $\F(M)$ has a monotone Schauder basis.
\end{lemma}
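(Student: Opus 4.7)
The plan is to transport the system of retractions $(r_n)$ to a system of contractive projections on $\F(M)$ via Lemma \ref{l:basicFreeSpace} and then verify the standard criterion for a monotone Schauder basis (nested contractive finite-rank projections $(P_n)$ with $P_m P_n = P_{\min(m,n)}$, ranks increasing by one, and pointwise convergence to the identity).

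First I would observe that each $r_n$ fixes $0_M = s_1$ (since $s_1$ lies in its image and $r_n$ is a retraction), so $r_n : M \to M$ is a 1-Lipschitz basepoint-preserving map. Lemma \ref{l:basicFreeSpace} therefore produces a linear operator $P_n := \widehat{r_n} : \F(M) \to \F(M)$ with $\|P_n\| \leq 1$. Using the functorial identity $\widehat{SL} = \widehat{S}\widehat{L}$ noted after Lemma \ref{l:basicFreeSpace}, the relations on $r_n$ translate as follows: $r_n \circ r_n = r_n$ (retraction) gives $P_n^2 = P_n$; the assumed $r_n \circ r_{n+1} = r_n$ gives $P_n P_{n+1} = P_n$; and $r_{n+1} \circ r_n = r_n$ (since $r_n(M) \subset \{s_k : k \leq n+1\}$ is fixed pointwise by $r_{n+1}$) gives $P_{n+1} P_n = P_n$. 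Iterating, $P_m P_n = P_{\min(m,n)}$ for all $m,n$.

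Next I would identify $\operatorname{Rng}(P_n) = \sspan\{\delta_M(s_k)\setsep 1\leq k\leq n\}$, which has dimension $n-1$ (as $\delta_M(s_1)=0$), and observe that these ranges form a strictly increasing chain whose union $\sspan\{\delta_M(s_n) \setsep n \in \en\}$ is dense in $\F(M)$ by the density of $\{s_n\}$ in $M$ together with the definition of $\F(M)$. For pointwise convergence $P_n x \to x$, note that any $y \in \operatorname{Rng}(P_N)$ is fixed by $P_n$ for all $n \geq N$ (since $P_n$ is a projection whose range contains $\operatorname{Rng}(P_N)$), so a standard $3\epsilon$-approximation, using $\|P_n\|\leq 1$, upgrades density of the union to pointwise convergence on all of $\F(M)$.

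At this point the sequence of projections $P_n$ is a monotone Schauder decomposition with rank increasing by $1$ at each step, so choosing any $e_n \in \operatorname{Rng}(P_{n+1}) \setminus \operatorname{Rng}(P_n)$ (for instance $e_n := \delta_M(s_{n+1}) - \delta_M(r_n(s_{n+1}))$) yields a monotone Schauder basis with partial-sum projections $P_{n+1}$. There is no serious obstacle in this lemma—the argument is essentially a dictionary between retractions on $M$ and projections on $\F(M)$; the genuinely hard part of Theorem \ref{t:schauderUltrametric} will be the construction of such a sequence $(r_n)$ in the ultrametric setting, which must be done elsewhere.
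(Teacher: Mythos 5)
Your proposal is correct and follows essentially the same route as the paper: both transport the retractions to norm-one projections $P_n=\widehat{\delta_M\circ r_n}$ via Lemma \ref{l:basicFreeSpace}, derive $P_nP_m=P_{\min\{n,m\}}$ from the composition rules for the $r_n$ (including the observation that $r_{n+1}\circ r_n=r_n$ because $r_n(M)\subset r_{n+1}(M)$), identify the ranges as $\sspan\{\delta_M(s_k)\setsep k\leq n\}$ with union dense in $\F(M)$, and invoke the standard criterion for a monotone Schauder basis. Your extra details (the $3\epsilon$ argument for $P_nx\to x$, the explicit basis vectors, and the correct rank count $n-1$ owing to $\delta_M(s_1)=0$) are all sound and only make explicit what the paper leaves to the folklore reference.
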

\begin{proof}Since $r_n(M)\subset r_{n+1}(M)$ for every $n\in\en$, we have $r_{n+1}\circ r_n = r_n$. By Lemma \ref{l:basicFreeSpace}, there are projections $P_n:\F(M)\to \F(M)$ with $\|P_n\|\leq 1$,  $P_n(\F(M)) = \sspan\{\delta_M(s_k)\setsep k\leq n\}$ and $P_n\circ P_m = P_{\min\{n,m\}}$ for every $n,m\in\en$. Obviously, $\dim P_n(\F(M)) = n$. Since $\bigcup_{n\in\en} P_n(\F(M))$ is dense in $\F(M)$, we have $P_n(x)\to x$ for every $x\in\F(M)$. 
Now, it is a folklore fact \cite[Lemma 4.7]{FHHMZ} that such a system of projections gives us a monotone Schauder basis on $\F(M)$.
\end{proof}

\begin{proof}[Proof of Theorem \ref{t:schauderUltrametric}]Let $(M,d)$ be a separable ultrametric space and fix a one-to-one sequence $(s_n)_{n\in\en}$ of points from $M$ with $0_M = s_1$ and $\overline{\{s_n\setsep n\in\en\}} = M$. For every $n\in\en$, put $S_n: = \{s_k\setsep k\leq n\}$. We will find a sequence of retractions $(r_n)_{n\in\en}$ satisfying the assumptions of Lemma \ref{l:sufficientSchauderBasis}.

Fix $n\in\en$.  First, we put $I_n(x):=\{k\in\en\setsep k\leq n\text{ and }\dist(x,S_n) = d(x,s_k)\}$. We denote by $i_n(x)$ the minimal natural number from $I_n(x)$. Finally, we define $r_n:M\to S_n$ by 
$$r_n(x): = s_{i_n(x)},\quad x\in M.$$
Now, we will verify that the sequence $(r_n)_{n\in\en}$ meets the requirements (i) and (ii) from Lemma \ref{l:sufficientSchauderBasis}. First, observe the following.

\begin{claim}\label{claim:1}
\begin{equation}\label{eq:sameIndices}
\forall x,y\in M\quad d(x,y) < \dist(x,S_n)\Rightarrow i_n(x) = i_n(y).
\end{equation}
\end{claim}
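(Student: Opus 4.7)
The plan is to prove the stronger statement that $d(x, s_k) = d(y, s_k)$ for every $k \leq n$ whenever $d(x, y) < \dist(x, S_n)$; the desired conclusion $i_n(x) = i_n(y)$ then follows at once, since in fact $I_n(x) = I_n(y)$.

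To see this, fix such $x, y$ and any index $k \leq n$. Because $\dist(x, S_n)$ is the minimum of the finitely many distances $d(x, s_j)$ with $j \leq n$, we have $d(x, s_k) \geq \dist(x, S_n) > d(x, y)$, and in particular $d(x, y) \neq d(x, s_k)$. The sharpened ultrametric property recorded just before Section~\ref{s:monSchauder} (if two of the three pairwise distances in a triple differ, then the third equals the maximum of the first two), applied to the triple $y, x, s_k$, gives
$$d(y, s_k) = \max\{d(y, x), d(x, s_k)\} = d(x, s_k).$$

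Since this identity is valid for every $k \leq n$, it follows immediately that $\dist(y, S_n) = \dist(x, S_n)$ and hence that $I_n(x) = I_n(y)$; taking minima yields $i_n(x) = i_n(y)$. There is no serious obstacle here: the only observation needed is that the hypothesis $d(x, y) < \dist(x, S_n)$ forces $d(x, y)$ to be strictly smaller than \emph{each} of the distances $d(x, s_k)$, $k \leq n$, so the sharpened ultrametric inequality can be invoked uniformly in $k$.
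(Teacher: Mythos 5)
Your proof is correct, and it takes a slightly different and in fact sharper route than the paper. You establish the pointwise identity $d(y,s_k)=d(x,s_k)$ for \emph{every} $k\le n$, by applying the sharpened ultrametric inequality to each triple $y,x,s_k$ separately; this is legitimate because $d(x,y)<\dist(x,S_n)\le d(x,s_k)$ guarantees that the two distances through $x$ differ, and it immediately yields $I_n(x)=I_n(y)$ and hence $i_n(x)=i_n(y)$. The paper instead works only at the level of the set distances: it first shows $\dist(y,S_n)\le\dist(x,S_n)$ by the ultrametric inequality applied to the single point $s_{i_n(x)}$, then rules out strict inequality by a short contradiction argument involving $s_{i_n(y)}$, and concludes $\dist(y,S_n)=\dist(x,S_n)$; the passage from this equality to $i_n(x)=i_n(y)$ is left implicit (it follows because the first chain of inequalities then forces $d(y,s_{i_n(x)})=\dist(y,S_n)$, so $i_n(x)\in I_n(y)$, and one symmetrizes). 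Your version avoids both the case analysis and the contradiction, and delivers the stronger conclusion $I_n(x)=I_n(y)$ in one stroke; the only small point worth making explicit is that $\dist(x,S_n)$ is attained since $S_n$ is finite, which you do note. Nothing is lost relative to the paper's argument, and the later applications of the Claim (in verifying that $r_n$ is $1$-Lipschitz and that $r_n\circ r_{n+1}=r_n$) only use the conclusion as stated, so your proof is a valid drop-in replacement.
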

\begin{proof}
Fix $x,y\in M$ with $d(x,y) < \dist(x,S_n)$. In order to see that \eqref{eq:sameIndices} holds, we show $\dist(y,S_n) = \dist(x,S_n)$. Indeed,
$$\dist(y,S_n)\leq d(y,s_{i_n(x)})\leq \max\{d(y,x), d(x,s_{i_n(x)})\} =  d(x,s_{i_n(x)}) = \dist(x,S_n).$$
Thus, in order to get a contradiction let us assume $\dist(y,S_n) < \dist(x,S_n)$. Then
$$\dist(y,S_n) < \dist(x,S_n) \leq d(x,s_{i_n(y)}) \leq \max\{d(x,y), d(y,s_{i_n(y)})\} = \max\{d(x,y), \dist (y,S_n)\}.$$
Now, if $\max\{d(x,y), \dist (y,S_n)\} = \dist (y,S_n)$, we get $\dist(y,S_n) < \dist(y,S_n)$, a contradiction. Otherwise, $\max\{d(x,y), \dist (y,S_n)\} = d(x,y) < \dist(x,S_n)$ and we get $\dist(x,S_n) < \dist(x,S_n)$, a contradiction. Thus, \eqref{eq:sameIndices} holds.
\end{proof}

Fix $x,y\in M$. In order to see that $r_n$ is a 1-Lipschitz mapping, we need to verify
\begin{equation}\label{eq:lipschitz}
\forall x,y\in M\quad d(s_{i_n(x)},s_{i_n(y)}) \leq d(x,y).
\end{equation}
If $d(x,y) < \max\{\dist(x,S_n),\dist(y,S_n)\}$ we get from \eqref{eq:sameIndices} and the symmetry of the situation that $i_n(x) = i_n(y)$ and \eqref{eq:lipschitz} is obvious. On the other hand, if $d(x,y) \geq \max\{\dist(x,S_n),\dist(y,S_n)\}$ we get
$$d(s_{i_n(x)},s_{i_n(y)})\leq \max\{d(s_{i_n(x)},x), d(x,y), d(y,s_{i_n(y)})\} = \max\{\dist(x,S_n), d(x,y), \dist(y,S_n)\}\leq d(x,y)$$
and \eqref{eq:lipschitz} holds.

It remains to show that, for every $n\in\en$, we have $r_n\circ r_{n+1} = r_n$. Fix $n\in\en$ and $x\in M$. Then it follows from the definitions above that either $i_{n+1}(x) = i_n(x)$ or $i_{n+1}(x) = n+1$. In both cases we will get $i_n(s_{i_{n+1}(x)}) = i_n(x)$. Indeed, this is trivial in the first case. Assume $i_{n+1}(x) = n+1$. Then $d(x,s_{n+1}) < \dist(x, S_n)$ and it follows from Claim \ref{claim:1} that $i_n(x) = i_n(s_{n+1})$. Hence, $i_n(s_{i_{n+1}(x)}) = i_n(s_{n+1}) = i_n(x)$. Therefore,
$$r_n(r_{n+1}(x)) = r_n(s_{i_{n+1}(x)}) = s_{i_n(s_{i_{n+1}(x)})} = s_{i_n(x)} = r_n(x)$$
and we are done.
\end{proof}

\section{Isomorphism with $\ell_1$}\label{s:ell1}

The purpose of this section is to prove Theorem \ref{t:freeUltrametric}. First, let us recall the notion of $\er$-trees and its link with Lipschitz-free spaces and ultrametric spaces.

\begin{definition}Let $(T,d)$ be a metric space such that, for every $x,y\in T$, there exists a unique isometry $\phi_{x,y}:[0,d(x,y)]\to T$ with $\phi_{x,y}(0) = x$ and $\phi_{x,y}(d(x,y)) = y$. Then we say that $T$ is an \emph{$\er$-tree} and we define the segment $[x,y]$ by $[x,y]: = \phi_{x,y}([0,d(x,y)])$.

Moreover, we say that $v\in T$ is a \emph{branching point} of $T$ if there are three points $x_1,x_2,x_3\in T\setminus\{v\}$ such that $[x_i,v]\cap[x_j,v] = \{v\}$ whenever $i,j\in\{1,2,3\}$, $i\neq j$. We denote by $\Br(T)$ the set of branching points of $T$.
\end{definition}

The link with Lipschitz-free spaces is contained in the following result, which has been proved by Godard in \cite[Corollary 3.4]{godard}. Note that the definition of an $\er$-tree and of a branching point above is not exactly as in \cite{godard}, but it is equivalent to it; see \cite[Chapter 3]{evans}.

\begin{proposition}\label{p:godard}Let $T$ be a separable $\er$-tree, and $A$ an infinite subset of $T$ such that $\Br(T)\subset\overline{A}$. If $\overline{A}$ does not contain any segment $[x,y]$ for $x\neq y$, then $\F(A)$ is isometric to $\ell_1$.
\end{proposition}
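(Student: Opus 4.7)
My approach is to use the $\er$-tree structure of $T$ to embed $\F(A)$ isometrically into an $L_1$-space, and then exploit the hypotheses on $A$ to identify the image as $\ell_1$.

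First, I fix a base point $0\in\overline{A}$ and consider the sub-$\er$-tree $T_A:=\bigcup_{a\in\overline{A}}[0,a]\subset T$, equipped with its $1$-dimensional Hausdorff (length) measure $\mu$. Define $\phi:A\to L_1(T_A,\mu)$ by $\phi(x)=\chi_{[0,x]}$. The $\er$-tree ``meet'' identity---if $z$ is the unique point with $[0,x]\cap[0,y]=[0,z]$, then $[0,x]\triangle[0,y]=[z,x]\sqcup[z,y]$ has $\mu$-measure $d(z,x)+d(z,y)=d(x,y)$---shows $\phi$ is isometric with $\phi(0)=0$. By Lemma~\ref{l:basicFreeSpace}, $\phi$ extends to a linear contraction $\hat\phi:\F(A)\to L_1(T_A,\mu)$.

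Next, I would promote $\hat\phi$ to an isometric embedding via a duality argument: for every $g\in L_\infty(T_A,\mu)$, the function $f_g:A\to\er$ defined by $f_g(x):=\int_{[0,x]}g\,d\mu$ is Lipschitz with $\|f_g\|_{\Lip}\leq\|g\|_\infty$ (again a direct consequence of the meet identity), and the assignment $g\mapsto f_g$ realizes $L_\infty(T_A,\mu)$ as a subspace of $\Lip_0(A)$ rich enough to recover the free-space norm from the $L_1$-norm on the image. This is essentially Godard's isometric identification of $\F(T)$ with an appropriate $L_1$-space for a separable $\er$-tree.

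Finally, to identify $\hat\phi(\F(A))$ with $\ell_1$, I would use both hypotheses. Since $\Br(T)\subset\overline{A}$, every branching point of the sub-tree $T_A$ lies in $\overline{A}$, so each connected component of $T_A\setminus\overline{A}$ is a branching-free arc $J_n$; separability of $A$ yields countably many such arcs. Since $\overline{A}$ contains no non-degenerate segment, each $J_n$ is maximal and its endpoints lie in $\overline{A}$. A structural analysis then shows that $\hat\phi(\F(A))$ decomposes as an $\ell_1$-direct sum of one-dimensional subspaces indexed by the arcs $J_n$, giving $\F(A)\cong\ell_1$ isometrically.

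The main obstacle is the isometricity in the second step: upgrading the natural contraction $\hat\phi$ to an isometric embedding is genuinely delicate and lies at the heart of Godard's original argument. A secondary subtlety is handling the measure-theoretic contribution of $\overline{A}$ itself, which can have positive $\mu$-measure even without containing a segment (as for a fat Cantor set inside an interval); this requires either a careful choice of the underlying $L_1$-space or a replacement of $\hat\phi$ by a ``differentiating'' variant so that the final decomposition of the image is truly by the gap-arcs $\{J_n\}$ and the identification with $\ell_1$ is isometric rather than merely isomorphic.
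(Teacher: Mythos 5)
The paper does not actually prove this proposition: it is imported verbatim as Godard's result \cite[Corollary 3.4]{godard}, so there is no internal proof to compare against, and you are in effect reconstructing Godard's argument. Your sketch does follow his strategy --- send $x$ to $\chi_{[0,x]}$ in $L_1$ of the length measure on the subtree spanned by $A$ --- and the first step (that $\phi$ is an isometry, hence $\hat\phi$ a contraction) is correct. But the two points you yourself flag as ``obstacles'' are the entire mathematical content, so what you have is an outline rather than a proof. For the isometry of $\hat\phi$, the duality you describe only re-derives the inequality $\|\hat\phi\mu\|_1\le\|\mu\|_{\F(A)}$ that you already have from Lemma \ref{l:basicFreeSpace}; what is needed is the converse, and that requires showing that \emph{every} $1$-Lipschitz $f$ on $A$ arises as $f_g$ with $\|g\|_\infty\le 1$ (extend $f$ to the subtree by McShane and differentiate along segments away from the root). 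Likewise the $\ell_1$-decomposition of the image is asserted, not proved.

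More seriously, your ``secondary subtlety'' is not a technicality that a cleverer choice of $L_1$-space can absorb: with the hypotheses exactly as stated, the conclusion is false. Take $T=[0,1]$ and $A=\overline A$ a fat Cantor set containing $0$ with positive Lebesgue measure. Then $\Br(T)=\emptyset\subset\overline A$ and $\overline A$ contains no nondegenerate segment, yet $\F(A)$ is isometric to $\overline{\sspan}\{\chi_{[0,a]}\setsep a\in A\}\subset L_1[0,1]$, and this span contains $\chi_E$ for every measurable $E\subset A$ (approximate $E$ from outside by finitely many intervals with endpoints in $A$), hence an isometric copy of the non-atomic space $L_1(A,\lambda)$; so $\F(A)$ is not even isomorphic to $\ell_1$. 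The hypothesis genuinely needed --- and the one in Godard's Corollary 3.4 --- is that $\overline A$ be negligible for the length measure, which is strictly stronger than being segment-free. That stronger condition does hold for the set $\Br(T)\cup N$ used in Section \ref{s:ell1}, since that set meets every segment of $T$ in an at most countable set, so the paper's application is unaffected; but your proof cannot be completed for the statement as transcribed, and a correct write-up must work from negligibility, at which point the gap-arc decomposition you describe does go through.
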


It belongs to a folklore fact that every ultrametric space embeds into an $\er$-tree. The shortest way of proving this statement is probably to show that an ultrametric space satisfies the ``four-point condition'' and then use the well-known fact that every metric space which satisfies this condition isometrically embeds into an $\er$-tree, see e.g. \cite[Theorem 3.38]{evans}.

In the following we will combine those two links and show that the Lipschitz-free space over a separable ultrametric space is isomorphic to $\ell_1$. First, we observe that it is enough to consider only ``$2^n$-valued ultrametric spaces''.

\begin{definition}A metric is said to be $2^ n$-valued if the only values assumed by the metric are $2^n$, $n\in\zet$.
\end{definition}

\begin{fact}\label{f:distances2}Any ultrametric space is $2$-bi-Lipschitz equivalent to a $2^n$-valued ultrametric space.
\end{fact}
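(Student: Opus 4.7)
The plan is to define a new metric on the same underlying set by rounding each original distance up to the nearest power of $2$, and to verify that this rounding preserves the ultrametric inequality.

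Concretely, given an ultrametric space $(M,d)$, I would define $\varphi:(0,\infty)\to\{2^n\setsep n\in\zet\}$ by letting $\varphi(t)$ be the unique $2^n$ with $2^{n-1}<t\le 2^n$, and then set
\[
\rho(x,y):=\begin{cases}\varphi(d(x,y))&\text{if }x\neq y,\\ 0&\text{if }x=y.\end{cases}
\]
By construction $d(x,y)\le \rho(x,y)<2\,d(x,y)$ whenever $x\neq y$, so the identity map $(M,d)\to(M,\rho)$ is a $2$-bi-Lipschitz bijection (mapping $0_M$ to $0_M$). This is the required equivalence.

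The only point to check is that $\rho$ is itself an ultrametric, and this is where the monotonicity of the rounding function does all the work. The function $\varphi$ is non-decreasing on $(0,\infty)$, and for $2^n$-valued inputs $a,b$ one clearly has $\varphi(\max\{a,b\})=\max\{\varphi(a),\varphi(b)\}$. Given $x,y,z\in M$ with $x\neq z$, the ultrametric inequality for $d$ gives $d(x,z)\le\max\{d(x,y),d(y,z)\}$ (with the convention $d(u,u)=0$, noting that if $x=y$ or $y=z$ the conclusion $\rho(x,z)\le\max\{\rho(x,y),\rho(y,z)\}$ is trivial). Applying $\varphi$ and using its monotonicity and the observation above,
\[
\rho(x,z)=\varphi(d(x,z))\le\varphi\bigl(\max\{d(x,y),d(y,z)\}\bigr)=\max\{\varphi(d(x,y)),\varphi(d(y,z))\}=\max\{\rho(x,y),\rho(y,z)\}.
\]
Symmetry of $\rho$ and positivity for distinct points are immediate, and only values of the form $2^n$ with $n\in\zet$ appear, so $(M,\rho)$ is a $2^n$-valued ultrametric space.

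There is no real obstacle here; the statement is a ``rounding trick'' and the content is simply that rounding up to a power of $2$ is monotone, which is precisely what the strong triangle inequality needs in order to be preserved.
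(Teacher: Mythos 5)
Your proposal is correct and is essentially the paper's own argument: the paper also replaces $d$ by a rounded metric, setting $\rho(x,y):=2^n$ whenever $d(x,y)\in[2^n,2^{n+1})$ (rounding down rather than up, which changes nothing of substance), and relies on the same observation that a monotone rounding preserves the strong triangle inequality and distorts distances by a factor less than $2$.
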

\begin{proof}Let $(M,d)$ be an ultrametric space. We put $\rho(x,y): = 2^n$ whenever $d(x,y)\in[2^n,2^{n+1})$. Then it is easy to see that $\rho:M\times M\to [0,\infty)$ is an ultrametric on $M$ and $\rho(x,y)\leq d(x,y)<2\rho(x,y)$.
\end{proof}

Next, we show that the embedding of an ultrametric space into $\er$-tree may be done in such a way that it satisfies certain additional conditions, see Proposition \ref{p:ultrametricInTree}. In order to find an $\er$-tree into which our ultrametric spaces embeds, we will follow the ideas from \cite[Theorem 3.38]{evans}, where it is proved that any metric space satisfying the ``four-point condition'' embeds isometrically into an $\er$-tree. However, our space is an ultrametric space, so the construction will be done in an easier way. Once the construction is done, we will show that the additional conditions mentioned above are satisfied.

We begin with the following Lemma, which is inspired by \cite[Lemma 3.5]{evans}.

\begin{lemma}\label{l:tree}Let $(M,d)$ be a metric space and let $(\phi_{x,y})_{x,y\in M}$ be a a family of isometries such that $\phi_{x,y}:[0,d(x,y)]\to M$ is an isometry with $\phi_{x,y}(0) = x$ and $\phi_{x,y}(d(x,y)) = y$. Put $[x,y]: = \phi_{x,y}[0,d(x,y)]$ and suppose that, for every $x,y,z\in M$, the following conditions are satisfied.
	\begin{enumerate}[\upshape (i)]
		\item $[x,y] = [y,x]$.
		\item $[x,z]\cap[z,y] = \{z\}\implies z\in[x,y]$.
		\item For every $i\in (0,d(x,y))$, we have $[x,\phi_{x,y}(i)]\subset \phi_{x,y}([0,i])$ and $[\phi_{x,y}(i),y]\subset \phi_{x,y}([i,d(x,y)])$.
	\end{enumerate}
Then $(M,d)$ is $\er$-tree.
\end{lemma}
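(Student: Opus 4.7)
The plan is to verify the unique-geodesic condition in the definition of $\er$-tree; existence of such an isometry is already given by the family $(\phi_{x,y})$, so only uniqueness remains. Fix $x,y\in M$, set $L:=d(x,y)$, and let $\psi:[0,L]\to M$ be any isometry with $\psi(0)=x$ and $\psi(L)=y$. I will show $\psi(t)=\phi_{x,y}(t)$ for every $t\in[0,L]$; the endpoint cases $t\in\{0,L\}$ are trivial, so fix $t\in(0,L)$ and put $z:=\psi(t)$.

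The crux is to verify the hypothesis of (ii) for the triple $(x,z,y)$, i.e.\ $[x,z]\cap[z,y]=\{z\}$. Take any $w$ in this intersection and write $w=\phi_{x,z}(a)$ with $a\in[0,t]$ and $w=\phi_{z,y}(b)$ with $b\in[0,L-t]$. Since $\phi_{x,z}$ and $\phi_{z,y}$ are isometries, the value $d(z,w)$ equals both $t-a$ and $b$, so $b=t-a$, and hence
\[
d(x,w)+d(w,y)=a+(L-t)-b=L-2(t-a).
\]
The triangle inequality $d(x,w)+d(w,y)\geq d(x,y)=L$ forces $a\geq t$, which combined with $a\leq t$ gives $a=t$ and $w=\phi_{x,z}(t)=z$.

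With $[x,z]\cap[z,y]=\{z\}$ established, hypothesis (ii) yields $z\in[x,y]=\phi_{x,y}([0,L])$, so $z=\phi_{x,y}(s)$ for some $s\in[0,L]$. Isometricity of $\phi_{x,y}$ and $\psi$ then gives $s=d(x,z)=d(\psi(0),\psi(t))=t$, whence $\psi(t)=\phi_{x,y}(t)$, as required.

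The main delicate step is verifying $[x,z]\cap[z,y]=\{z\}$: one must derive it from only the bare axioms (i)--(iii) together with the triangle inequality, without smuggling in any tree structure. What makes the squeeze work is the tight equality $d(x,z)+d(z,y)=d(x,y)$ inherited from $\psi$ being an isometric geodesic, which is exactly what turns the triangle inequality into the strict constraint $a\geq t$. The symmetry (i) is implicit in the way the segments $[z,y]=[y,z]$ are used, while (iii) does not appear to enter the uniqueness argument itself and is presumably included because it will be needed when verifying the hypotheses in the intended construction (Proposition~\ref{p:ultrametricInTree}).
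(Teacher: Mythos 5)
Your proof is correct, and it takes a genuinely different and more economical route than the paper's. The paper fixes $i\in(0,d(x,y))$ and works with the three segments $\sigma_1=[x,\phi_{x,y}(i)]$, $\sigma_2=[\phi_{x,y}(i),y]$ and $\rho=[\phi_{x,y}(i),\tau(i)]$ (where $\tau$ is the competing isometry): condition (iii) is used to show that any $u\in\sigma_1\cap\rho$ and $v\in\sigma_2\cap\rho$ satisfy $d(u,v)=d(u,\phi_{x,y}(i))+d(\phi_{x,y}(i),v)$, which together with the arrangement of $u,v$ along $\rho$ forces one of $\sigma_1\cap\rho$, $\sigma_2\cap\rho$ to reduce to $\{\phi_{x,y}(i)\}$; conditions (i) and (ii) then place $\phi_{x,y}(i)$ on $[x,\tau(i)]$ or on $[\tau(i),y]$, and a length count finishes. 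You instead apply (ii) directly to the triple $(x,z,y)$ with $z=\psi(t)$, and verify its hypothesis $[x,z]\cap[z,y]=\{z\}$ by a clean triangle-inequality squeeze that uses only the isometric parametrizations $\phi_{x,z}$, $\phi_{z,y}$ and the tight relation $d(x,z)+d(z,y)=d(x,y)$ inherited from $\psi$; the computation $d(x,w)+d(w,y)=L-2(t-a)\geq L$ forcing $a=t$ is airtight. This avoids the case analysis entirely and, as you observe, never invokes (iii) --- in fact it never invokes (i) either, since you parametrize $[z,y]$ by $\phi_{z,y}$ directly --- so your argument actually establishes the lemma under hypothesis (ii) alone, a slightly stronger statement. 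What the paper's longer route buys is essentially fidelity to the template of \cite[Lemma 3.5]{evans}, which it cites as its inspiration; the extra hypotheses are harmless in the application, since (i) and (iii) are verified anyway for the concrete family constructed in Proposition \ref{p:ultrametricInTree}.
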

\begin{proof}Let $\tau:[0,d(x,y)]\to M$ be an isometry with $\tau(0) = x$ and $\tau(d(x,y)) = y$. Fix $i\in(0,d(x,y))$. We will show that $\tau(i) = \phi_{x,y}(i)$.

Put $\sigma_1 := [x,\phi_{x,y}(i)]$, $\sigma_2:= [\phi_{x,y}(i), y]$ and $\rho: = [\phi_{x,y}(i),\tau(i)]$. Then either $\sigma_1\cap \rho = \{\phi_{x,y}(i)\}$ or $\sigma_2\cap \rho = \{\phi_{x,y}(i)\}$. Indeed, fix $u\in \sigma_1\cap \rho$ and $v\in \sigma_2\cap \rho$. By (iii), we have $d(u,v) = d(u,\phi_{x,y}(i)) + d(\phi_{x,y}(i),v)$. Moreover, either $d(\phi_{x,y}(i),u) = d(\phi_{x,y}(i), v) + d(v,u)$ or $d(\phi_{x,y}(i),v) = d(\phi_{x,y}(i),u) + d(u,v)$, depending on how $u$ and $v$ are arranged in $\rho$. It follows that either $u = \phi_{x,y}(i)$ or $v = \phi_{x,y}(i)$.

Let us consider the case when $\sigma_2\cap \rho = \{\phi_{x,y}(i)\}$. Then, by (i) and (ii), we have $\phi_{x,y}(i)\in [\tau(i),y]$; hence, $d(\tau(i),\phi_{x,y}(i)) + d(\phi_{x,y}(i),y) = d(\tau(i),y)$. Since $d(\phi_{x,y}(i),y) = d(x,y) - i = d(\tau(i),y)$, we have $\tau(i) = \phi_{x,y}(i)$. Similarly, if $\sigma_1\cap \rho = \{\phi_{x,y}(i)\}$ then $\tau(i) = \phi_{x,y}(i)$.

As $i\in(0,d(x,y))$ was arbitrary, we have that $\tau = \phi_{x,y}$. Hence, isometries $\phi_{x,y}$ are unique and $(M,d)$ is $\er$-tree.
\end{proof}

\begin{proposition}\label{p:ultrametricInTree}Let $(M,d)$ be a $2^n$-valued ultrametric space. Then there exists an $\er$-tree $(T,\rho)$ such that:
	\begin{enumerate}[\upshape (i)]
		\item $M$ isometrically embeds into $T$.
		\item $\overline{\Br(T)\cup M}$ does not contain any segment $[x,y]$ for $x\neq y$.
		\item $M$ is $4$-Lipschitz retract of $\Br(T)\cup M$.
	\end{enumerate}
Moreover, if $M$ is separable, then $T$ is separable as well.
\end{proposition}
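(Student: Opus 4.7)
My plan is to build $T$ as an explicit ``cone'' on $M$ and then read off the desired properties from the construction. Set
$$T := (M \times [0, \infty))/{\sim}, \quad (x, t) \sim (y, s) \iff t = s \text{ and } t \geq d(x, y)/2;$$
transitivity of $\sim$ uses exactly the ultrametric inequality. Writing $[x, t]$ for the equivalence class and $h_{xy} := d(x, y)/2$, I would define
$$\rho([x, t], [y, s]) := 2\max\{t, s, h_{xy}\} - t - s,$$
verify that it is well-defined, symmetric, and non-degenerate, and reduce the triangle inequality to
$$\max\{t, s, h_{xy}\} + \max\{s, u, h_{yz}\} \geq \max\{t, u, h_{xz}\} + s,$$
which follows from the ultrametric inequality $h_{xz} \leq \max\{h_{xy}, h_{yz}\}$ after a short case split on which term realizes the right-hand maximum. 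The map $x \mapsto [x, 0]$ is then automatically isometric.

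Next I would show $(T, \rho)$ is an $\er$-tree via Lemma~\ref{l:tree}. For $v_1 = [x, t_1]$ and $v_2 = [y, t_2]$, set $h := \max\{t_1, t_2, h_{xy}\}$ and define $\phi_{v_1, v_2} : [0, \rho(v_1, v_2)] \to T$ to travel from $v_1$ up the $x$-fiber to the merge point $[x, h] = [y, h]$ and then down the $y$-fiber to $v_2$; verifying conditions (i)--(iii) of Lemma~\ref{l:tree} is routine but splits into several subcases. From the construction one reads off that $[x, t] \in \Br(T)$ iff $t = d(x, y)/2 = 2^{n-1}$ for some $n \in \zet$ and some $y \in M \setminus \{x\}$, and that two such classes at a common height $2^{n-1}$ coincide iff the balls $B_n(x) := \{y \in M : d(x, y) \leq 2^n\}$ agree.

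For (ii), note that the height function $[x, t] \mapsto t$ is $1$-Lipschitz on $T$ and on $\Br(T) \cup M$ takes values only in the countable set $S := \{0\} \cup \{2^{m-1} : m \in \zet\}$, which is closed in $\er$ (its only finite accumulation point is $0 \in S$). Hence every $v \in \overline{\Br(T) \cup M}$ still has height in $S$, while the image of any non-degenerate segment in $T$ sweeps a non-trivial interval of heights, which cannot lie inside the countable set $S$. For (iii), I would define $r : \Br(T) \cup M \to M$ by $r|_M = \mathrm{id}$ and, for $v = [x, 2^{n-1}] \in \Br(T)$, $r(v) := $ any fixed point of $B_n(x)$; this is well-defined by the identification above. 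Since $M$ is ultrametric, two balls $B_{n_1}(x_1), B_{n_2}(x_2)$ are either nested or disjoint, and a case analysis bounds the Lipschitz ratio of $r$ by $4$: in the nested case with $n_1 < n_2$ one has $\rho(v_1, v_2) = 2^{n_2 - 1} - 2^{n_1 - 1}$ and $d(r(v_1), r(v_2)) \leq 2^{n_2}$, giving ratio at most $4$ (tight when $n_2 = n_1 + 1$); in the disjoint case and in the mixed case where one of the $v_i$ lies in $M$, the ratio is at most $2$.

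Separability is immediate: if $D \subset M$ is countable dense, then $\{[d, q] : d \in D, q \in \mathbb{Q} \cap [0, \infty)\}$ is countable dense in $T$. I expect the main work to be in the multi-case verifications -- the triangle inequality for $\rho$, the hypotheses of Lemma~\ref{l:tree}, and the sharp $4$-Lipschitz bound, where the constant $4$ is forced precisely by the nested-consecutive-scale configuration $n_2 = n_1 + 1$.
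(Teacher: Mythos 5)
Your proposal is correct and follows essentially the same route as the paper: the identical cone/quotient construction of $T$ with the same metric $\rho$, verification via Lemma~\ref{l:tree}, the same identification of $\Br(T)$ with classes $\langle x, d(x,y)/2\rangle$, and the same retraction (send a branching point to a chosen point of the corresponding ball), with your nested/disjoint-ball case split being an equivalent reorganization of the paper's exponent computation yielding the same constants $2$ and $4$. The only cosmetic difference is in (ii), where you use the $1$-Lipschitz height function taking values in the closed countable set $S$ instead of the paper's observation that $\Br(T)\cup M$ is closed with all mutual distances rational; both arguments are sound.
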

\begin{proof}First, we construct an $\er$-tree $T$ such that $M$ isometrically embeds into $T$. Then we will verify that (ii) and (iii) holds.

Put $Y: = \{(m,i)\setsep m\in M,\,i\in[0,\infty)\}$. Define the following equivalence relation $\sim$ on $Y$:
$$(m,i) \sim (n,j)\quad \Longleftrightarrow i = j\geq d(m,n)/2.$$
Note that $\sim$ is an equivalence relation because $d$ is an ultrametric. Let $\langle m,i\rangle$ denote the equivalence class of $(m,i)$ and put $T := Y/_\sim$. We define the mapping $\rho:T\times T\to [0,\infty)$ by
$$\rho(\langle m,i\rangle, \langle n,j\rangle): = 2\max\{i,j,d(m,n)/2\} - (i+j).$$
\begin{center}\begin{tikzpicture}
  \coordinate [label={above left:$n$}] (A) at (0,1);
  \coordinate [label={above right:$\frac{d(m,n)}{2}$}] (B) at (2, 0);
  \coordinate [label={below left:$m$}] (C) at (0,-1);
  
  \draw [very thick] (C) -- (B) -- (A);
  \draw [very thick] (B) -- (4,0);  
\end{tikzpicture}\end{center}
Observe that, if $d(m,n)/2 > i$ and $d(m,n)/2 > j$, we have $\rho(\langle m,i\rangle, \langle n,j\rangle) = (d(m,n)/2 - i) + (d(m,n)/2 - j)$. Otherwise, $\rho(\langle m,i\rangle, \langle n,j\rangle) = |i-j|$. It is straightforward to check that $\rho$ is well defined metric on $T$. Obviously, $M\ni m\mapsto \langle m,0\rangle$ is an isometric embedding of $M$ into $T$ and $T$ is separable whenever $M$ is. In order to see that $(T,\rho)$ is $\er$-tree we will find, for every $x,y\in T$, isometry $\phi_{x,y}:[0,\rho(x,y)]\to T$ in such a way that the family $(\phi_{x,y})_{x,y\in T}$ satisfies the assumptions of Lemma \ref{l:tree}.

Fix $x,y\in T$. There are $m,n\in M$ and $i,j\in[0,\infty)$ with $x = \langle m,i\rangle$ and $y = \langle n,j\rangle$. We distinguish the following cases:
	\begin{itemize}
		\item If $j\leq i$ and $i\geq d(m,n)/2$, we put
			$$\phi_{x,y}(t): = \langle n, \rho(x,y) + j - t\rangle,\quad t\in[0,\rho(x,y)].$$
		\item If $j\leq i$ and $i < d(m,n)/2$, we put
			$$\phi_{x,y}(t): = \begin{cases}
			\langle m, i+t\rangle & \text{for } t\in[0,d(m,n)/2 - i]\\
			\langle n, d(m,n) - i - t\rangle & \text{for } t\in[d(m,n)/2 - i, \rho(x,y)].\\
			\end{cases}$$
		\item If $j > i$, we put
			$$\phi_{x,y}(t): = \phi_{y,x}(\rho(x,y) - t),\quad t\in[0,\rho(x,y)].$$
	\end{itemize}
Considering all the possible cases, it is straightforward to check that the family $(\phi_{x,y})_{x,y\in T}$ satisfies the assumptions of Lemma \ref{l:tree}; hence, $(T,d)$ is $\er$-tree and $\phi_{x,y}$ are the unique isometries from the definition of an $\er$-tree.
\begin{claim}\label{claimBr}$\Br(T) = \{\langle m, d(m,n)/2\rangle\setsep m,n\in M, m\neq n\}$
\end{claim}
\begin{proof}``$\supset$'' If $v = \langle m, d(m,n)/2\rangle$ for some $m,n\in M$, then we put $x_1 := \langle m, 0\rangle$, $x_2 := \langle n, 0\rangle$, $x_3 := \langle m, d(m,n)\rangle$ and we check that the points $x_1, x_2, x_3$ are the points witnessing the fact that $v\in\Br(T)$.

``$\subset$'' Fix $v\in\Br(t)$ and let $x_k = \langle m_k, i_k\rangle\in T\setminus\{v\}$, $k\in\{1,2,3\}$ be the points witnessing the fact that $v\in\Br(T)$. There cannot be a point $x\in M$ such that for every $k\in\{1,2,3\}$ we would have $x_k=\langle x,i_k\rangle$, as otherwise, we would have that all the points lie on a common line segment. We distinguish two cases:
\begin{itemize}
\item Two points, let us say $x_1,x_2$, lie on a common branch, i.e. there exists $x\in M$ such that $x_k=\langle x,i_k\rangle$ for $k\in\{1,2\}$. We will show that in this case $v=\langle x,d(x,m_3)/2\rangle$. We may without loss of generality assume that $i_1<i_2$. Notice that $i_1\leq d(x,m_3)/2\leq i_2$. Indeed, if $d(x,m_3)/2<i_1$ then we have $[x_3,x_1]\cap [x_1,x_2] = \{x_1\}$, a contradiction with $x_1\neq v\in [x_3,x_1]\cap [x_1,x_2]$. The case when $i_2<d(x,m_3)/2$ is analogous. It follows that $\langle x,d(x,m_3)/2\rangle$ is a branching point witnessed by $x_1,x_2,x_3$ and since a triple of points can clearly witness at most one branching point it follows that $v=\langle x,d(x,m_3)/2\rangle$.
\item No two points lie on a common branch. By the ultrametric triangle inequality, we may without loss of generality assume that $d(m_1,m_2)\leq d(m_1,m_3) = d(m_3,m_2)$. We claim that $v=\langle m_1,d(m_1,m_2)/2\rangle$. Indeed, it suffices to check that $[x_3,\langle m_1,d(m_1,m_2)/2\rangle ]\cap [x_1,x_2]=\{\langle m_1,d(m_1,m_2)/2\rangle\}$ which follows from the fact that $[x_3,\langle m_1,d(m_1,m_2)/2\rangle ]=[x_3,\langle m_3,d(m_3,m_1)/2]\cup [\langle m_1,d(m_3,m_1)/2\rangle, \langle m_1,d(m_1,m_2)/2\rangle ]$.

\end{itemize}
\end{proof}
Note that so far we have not used the fact that $(M,d)$ is $2^n$-valued. Thus, the embedding as described above works for an arbitrary ultrametric space. In order to prove (ii) and (iii) we will use the assumption that $(M,d)$ is $2^n$-valued. From now on we will not distinguish between $m\in M$ and $\langle m,0\rangle$, its isometric copy in $T$.
\begin{claim}$\Br(T)\cup M = \overline{\Br(T)\cup M}$.
\end{claim}
\begin{proof}
Fix $x = \langle m,i\rangle\in T\setminus (\Br(T)\cup M)$. We need to find $\varepsilon > 0$ with $B(x,\epsilon)\cap (\Br(T)\cup M) = \emptyset$. Find  $n_0$ such that $i > d(m,n_0)/2 = \sup\{d(m,n)/2\setsep n\in M,\; i > d(m,n)/2\}\geq d(m,m)/2 = 0$; note that such an $n_0$ exists, because and the set $\{2^n\setsep n\in\zet\}$ does not have any positive cluster point. If, for every $n\in M\setminus\{m\}$, $i > d(m,n)/2$, we put $\varepsilon : = \min\{i - d(m,n_0)/2,i/2\}$. Otherwise, we find $n_1$ with $i < d(m,n_1)/2 = \inf\{d(m,n)/2\setsep n\in M,\; i < d(m,n)/2\}$ and we put $\varepsilon := \min\{i - d(m,n_0)/2, d(m,n_1)/2 - i, i/2\}$.  In any case straightforward computations show that $B(x,\epsilon) = \{\langle m,j\rangle\setsep |j-i| < \varepsilon\}$ and $B(x,\varepsilon)\cap (\Br(T)\cup M) = \emptyset$.
\end{proof}
As the distances between points in $\Br(T)\cup M$ cannot be irrational numbers, it follows that $\overline{\Br(T)\cup M} = \Br(T)\cup M$ does not contain any segment $[x,y]$ for $x\neq y$. 
Hence, it remains to prove (iii). For every $v\in\Br(T)$ find some $m_v,n_v\in M$ with $v = \langle m_v,d(m_v,n_v)/2\rangle$. We define the retraction $r:\Br(T)\cup M\to M$ as follows:
$$
r(a) : = \begin{cases}a,&\quad\text{ if }a\in M\\
			\langle m_a,0\rangle,&\quad\text{ if }a\in\Br(T).\\
			\end{cases}
$$
Obviously, $r\circ r = r$. It remains to show that $r$ is 4-Lipschitz. If $a,b\in M$ then obviously $\rho(r(a),r(b)) = \rho(a,b)$. Fix $a\in M$ and $b\in\Br(T)$. Then $\rho(r(a),r(b)) = d(a,m_b)$. Hence, the estimation of the Lipschitz constant follows from the following Claim.
\begin{claim}\label{c1}Let $a\in M$ and $b\in\Br(T)$. Then $d(a,m_b) \leq 2\rho(\langle a,0\rangle,b)$.
\end{claim}
\begin{proof}First, let us prove that
	\begin{equation}\label{eq:end}
		d(a,m_b)\leq 2\max\{d(m_b,n_b), d(m_b,a)\} - d(m_b,n_b).
	\end{equation}
Indeed, if $d(m_b,n_b) \geq d(m_b,a)$ we have $$2\max\{d(m_b,n_b), d(m_b,a)\} - d(m_b,n_b) =  d(m_b,n_b)\geq  d(m_b,a).$$
Otherwise, $$2\max\{d(m_b,n_b), d(m_b,a)\} - d(m_b,n_b) = 2 d(m_b,a) - d(m_b,n_b) > d(m_b,a).$$

Now, the following computation proves the Claim:
\begin{equation*}\begin{split}
	d(a,m_b) & \stackrel{\eqref{eq:end}}{\leq}2\max\{d(m_b,n_b), d(m_b,a)\} - d(m_b,n_b)\\
	& = 2\left(2\max\left\{\tfrac{d(m_b,n_b)}{2},\tfrac{d(m_b,a)}{2}\right\} - \tfrac{d(m_b,n_b)}{2}\right) = 2\rho(\langle a,0\rangle,b).
\end{split}\end{equation*}
\end{proof}
Fix $a,b\in\Br(T)$, $a\neq b$. Then $\rho(r(a),r(b)) = d(m_a,m_b)$. Hence, the estimation of the Lipschitz constant follows from the following Claim.
\begin{claim}\label{c2}Let $a,b\in\Br(T)$, $a\neq b$. Then $d(m_a,m_b) \leq 4\rho(a,b)$.
\end{claim}
\begin{proof}As $M$ is $2^n$-valued, there are $m,n,k\in\zet$ with $d(m_a,m_b) = 2^m$, $d(m_a,n_a) = 2^n$ and $d(m_b,n_b) = 2^k$. Interchanging the roles of $a, b$ we may without loss of generality assume that $n\geq k$. Now, we will show that
	\begin{equation}\label{eq:end2}
	2^m\leq 4\left(2^{\max\{m,n\}} - 2^{n-1} - 2^{k-1}\right).
	\end{equation}
Indeed, if $m > n$ we have $1 = 2(1 - 2^{-1})\leq 2(1 - 2^{n-m})$ and
	\begin{equation*}\begin{split}
	2^m& \leq 2^m2\left(1-2^{n-m}\right) = 2\left(2^m - 2^n\right) = 2\left(2^m - 2^{n-1}-2^{n-1}\right) \\
	& \leq 2\left(2^m - 2^{n-1} - 2^{k-1}\right)\leq 4\left(2^{\max\{m,n\}} - 2^{n-1} - 2^{k-1}\right).\\
	\end{split}\end{equation*}
If $m\leq n$ and $n > k$, then we have $1 = 2(1 - 2^{-1})\leq 2(1 - 2^{k-n})$ and
	\begin{equation*}\begin{split}
	2^m & \leq 2^n\leq 2^n2\left(1 - 2^{k-n}\right) = 2\left(2^n - 2^k\right) = 4\left(2^n - 2^{n-1} - 2^{k-1}\right)\\
	& = 4\left(2^{\max\{m,n\}} - 2^{n-1} - 2^{k-1}\right).
	\end{split}\end{equation*}
The remaining case $m\leq n = k$ leads to a contradiction because then we would have $d(m_a,m_b)\leq d(m_b,n_b) = d(m_a,n_a)$, so $(m_b,d(m_b,n_b))\in\langle m_a,d(m_a,n_a)\rangle$ and $b = a$. Thus, \eqref{eq:end2} holds.

Now, the following computation proves the Claim:
\begin{equation*}\begin{split}
d(m_a,m_b) & = 2^m \stackrel{\eqref{eq:end2}}{\leq}4\left(2^{\max\{m,n\}} - 2^{n-1} - 2^{k-1}\right)\\
& = 4\left(\max\left\{2^m,2^n,2^k\right\} - 2^{n-1} - 2^{k-1}\right)\\
& = 4\left(2\max\left\{\tfrac{d(m_a,m_b)}{2},\tfrac{d(m_a,n_a)}{2},\tfrac{d(m_b,n_b)}{2}\right\} - \left(\tfrac{d(m_a,n_a)}{2} + \tfrac{d(m_b,n_b)}{2}\right)\right) = 4\rho(a,b).\\
\end{split}\end{equation*}
\end{proof}
We have verified that $r:\Br(T)\cup M\to M$ is a 4-Lipschitz retraction, which proves (iii). This completes the proof of the Proposition.
\end{proof}

Now, it is straightforward to use the above and prove Theorem \ref{t:freeUltrametric}.

\begin{proof}[Proof of Theorem \ref{t:freeUltrametric}]Let $M$ be a separable ultrametric space. By Fact \ref{f:distances2} and Fact \ref{f:isomorphism}, there is a $2^n$-valued separable ultrametric space $N$ such that $\F(M)$ is isomorphic to $\F(N)$. By Proposition \ref{p:ultrametricInTree} and Fact \ref{f:staciRetrakce}, there is a separable $\er$-tree $T$ such that $\F(N)$ is isometric to a complemented subspace of $\F(\Br(T)\cup N)$ and $\overline{\Br(T)\cup N}$ does not contain any segment. By Proposition \ref{p:godard}, $\F(\Br(T)\cup N)$ is isometric to $\ell_1$. Thus, $\F(M)$ is isomorphic to a complemented subspace of $\ell_1$. It is a well-known result of Pe{\l}czy{\'n}ski, see e.g. \cite[Corollary 4.48]{FHHMZ}, that this is possible only if $\F(M)$ is isomorphic to $\ell_1$.
\end{proof}

\begin{remark}Note that in the proof of Theorem \ref{t:freeUltrametric} we could also use the result of Matou\v{s}ek \cite{mat} to see that $\F(N)$ is isometric to a complemented subspace of $\F(\Br(T)\cup N)$ (because, by \cite{mat}, there is a linear extension operator from $\Lip_0(N)$ to $\Lip_0(T)\supset \Lip_0(\Br(T)\cup N)$). However, we decided to  prove the existence of a retraction instead as it gives us deeper insight into the situation. In this case, the linear extension operator is just $\Lip_0(N)\ni f\mapsto f\circ r$, where $r:\Br(T)\cup N\to N$ is the retraction from (iii) in Proposition \ref{p:ultrametricInTree}.
\end{remark}

\section{Final Remarks and Open problems}

\begin{remark}Note that the use of constant $2$ was important in the proof of Proposition \ref{p:ultrametricInTree}; more precisely, in the proofs of Claims \ref{c1} and \ref{c2}. Therefore, careful examination gives us a rough estimate on the Banach-Mazur distance from $\F(M)$ to $\ell_1$. However, we do not know what is the infimum of all $C > 1$ such that every separable ultrametric space is $C$-isomorphic to $\ell_1$. In the first version of this preprint we asked the following question, which has been already answered - see the remark following this question.
\end{remark}

\begin{question}Is the Lipschitz-free space over a separable ultrametric space isometric to $\ell_1$?
\end{question}

\begin{remark}It has been proved independently by the authors and by A.Dalet, P. Kaufmann and T. Proch\'azka in \cite{aude3} that the Lipschitz-free space over a separable ultrametric space with at least three points is never isometric to $\ell_1$. The proof given by the authors involves some knowledge of properties of $\ell_1$ and certain tedious computations. The proof from \cite{aude3} does not need so many computations and is more related to the properties of $\ell_\infty$. Moreover, it works also in the non separable case. Therefore, it is in our opinion worth reading and we refer the reader there.  Let us sketch our proof here.

Let $N$ be finite subset of a separable ultrametric space $M$. By the inspection of the proof of Theorem \ref{t:schauderUltrametric} we can see that $\F(N)$ is 1-complemented in $\F(M)$ (the reason is that in the proof of Theorem \ref{t:schauderUltrametric} we choose an arbitrary countable dense subset of $M$; hence, we may choose it in such a way that it contains the points from $N$). Now, we recall that every 1-complemented subspace of $\ell_1$ is isometric to $\ell_1$; see e.g. \cite[page 55-56]{liTz} (here the proof is given only for $\ell_p$ spaces with $p\in(1,\infty)$ and it is claimed that in the case of $p=1$ it is simpler - the only difference in the case of $p=1$ is the proof of \cite[Lemma 2.a.5]{liTz}, which is provided e.g. in \cite[Lemma 6.7]{lacey}). Hence, it suffices to prove that the Lipschitz-free space over a three-point ultrametric space is never isometric to $\ell_1^2$.

Let $M = \{x,y,0\}$ be an ultrametric space with metric $d$. Because, for every $r>0$, $\F(M,d)$ is isometric to $\F(M,rd)$ and because Lipschitz-free spaces are isometric if we take another point to be $0$, it suffices to consider the case when $0 < s: = d(x,y)\leq d(x,0) = d(y,0) = 1$. Now, we easily observe that
\[
\begin{split}
\|\delta_x\| = \|\delta_y\| = 1, \\
\|\delta_x - \delta_y\| = s,\\
\|\delta_x + \delta_y\| = 2,\\
\forall \beta > 0:\qquad \max\{s,s\beta,s/2(\beta + 1)\} \leq \|\delta_x - \beta \delta_y\|.\\
\end{split}
\]
Now, let $T:\F(M)\to (\er^2,\|\cdot\|_1)$ be an isometry with $(a_x,b_x) := T(\delta_x)$ and $(a_y,b_y) := T(\delta_y)$. Using the above, the numbers $a_x, b_x, a_y, b_y$ should satisfy the following:
\begin{equation*}
|a_x| + |b_x| = 1,
\end{equation*}
\begin{equation*}
|a_y| + |b_y| = 1,
\end{equation*}
\begin{equation*}
|a_x - a_y| + |b_x - b_y| = s,
\end{equation*}
\begin{equation*}
|a_x + a_y| + |b_x + b_y| = 2,
\end{equation*}
\begin{equation*}
\forall \beta > 0:\qquad \max\{s,s\beta,s/2(\beta + 1)\} \leq |a_x - \beta a_y| + |b_x - \beta b_y|,
\end{equation*}
\begin{equation*}
\forall \beta > 0:\qquad \max\{s,s\beta,s/2(\beta + 1)\} \leq |a_y - \beta a_x| + |b_y - \beta b_x|.
\end{equation*}
However, it is possible to find out that such a system of equations does not have a solution. Even though the computations leading to this conclusion are very tedious, they are absolutely elementary and so we omit them. Moreover, using an alternative approach from \cite{aude3}, it is possible to avoid them.
\end{remark}

It is proved in \cite[Proposition 15.7]{davSem} that every ``uniformly disconnected'' metric space is Lipschitz equivalent to an ultrametric space. We also refer to \cite[Definition 15.1]{davSem} for a precise definition of uniform disconnectedness and for examples. Hence, the following corollary follows from Theorem \ref{t:freeUltrametric} and Fact \ref{f:isomorphism}.
\begin{cor}
Let $M$ be a separable uniformly disconnected metric space.Then $\F(M)$ is isomorphic to $\ell_1$.
\end{cor}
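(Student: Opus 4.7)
The plan is to chain together the two cited inputs: the fact that every uniformly disconnected metric space is bi-Lipschitz equivalent to an ultrametric space, together with Theorem \ref{t:freeUltrametric}, which identifies the Lipschitz-free space of any separable ultrametric space with $\ell_1$. So I would begin by invoking \cite[Proposition 15.7]{davSem} to produce an ultrametric space $(N,\rho)$ and a constant $K>0$ such that $M$ and $N$ are $K$-bi-Lipschitz equivalent via some bijection $\psi:M\to N$.

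Next I would verify that $N$ inherits separability from $M$: any bi-Lipschitz equivalence is a homeomorphism, and the image of a countable dense subset of $M$ under $\psi$ is a countable dense subset of $N$. One small point to be careful about is that Fact \ref{f:isomorphism} is stated for pointed metric spaces, so I would designate $\psi(0_M)$ as the base point of $N$; by composing $\psi$ with a translation of base point in $N$ if necessary (which is harmless because Lipschitz-free spaces are independent of the choice of base point up to isometry), I obtain a $K$-bi-Lipschitz equivalence of pointed spaces.

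Then I apply Fact \ref{f:isomorphism} to conclude that $\F(M)$ is $K$-isomorphic to $\F(N)$, and Theorem \ref{t:freeUltrametric} to conclude that $\F(N)$ is isomorphic to $\ell_1$. Composing these two isomorphisms finishes the proof.

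There is no real obstacle: the corollary is a direct consequence of the results already established in the paper combined with an off-the-shelf statement from \cite{davSem}. The only thing to be mindful of is that Fact \ref{f:isomorphism} applies to pointed spaces, which is a formality handled by base-point bookkeeping as noted above.
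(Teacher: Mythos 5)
Your proposal is correct and follows exactly the route the paper takes: the corollary is stated as an immediate consequence of \cite[Proposition 15.7]{davSem}, Fact \ref{f:isomorphism} and Theorem \ref{t:freeUltrametric}, which is precisely your chain of reasoning (your extra remarks on separability and base-point bookkeeping are sound but routine).
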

 Every uniformly disconnected is totally disconnected; however, there exists a countable totally disconnected compact space $K$ which is not uniformly disconnected - consider, for example, $K = \{1/n\setsep n\in\en\}\cup\{0\}$. However, A. Dalet proved in \cite{aude1} that $\F(K)$ is a dual space and has MAP whenever $K$ is countable compact. This suggests the following question which has already been asked by G. Godefroy.

\begin{question}Does the Lipschitz-free space over a totally disconnected compact metric space have the BAP? Is it a dual space?
\end{question}

Our last observation concerns linearly rigidity. R. Holmes in \cite{Holmes} proved that the Urysohn universal metric space admits (up to isometry) a unique linearly dense isometric embedding into a Banach space. In other words, any isometric embedding of the Urysohn space with a distinguished point into a Banach space $X$ that maps the distinguished point on $0_X$ extends to a linear isometric embedding of the free space over the Urysohn space $\F (\mathbb{U})$ into $X$. J. Melleray, F. Petrov and A. Vershik in \cite{MePeVe} investigated this property of the Urysohn space further and found other metric spaces with this property. They call them linearly rigid metric spaces. This is another corollary of Theorem \ref{t:freeUltrametric}.

\begin{cor}
No separable ultrametric space is linearly rigid.
\end{cor}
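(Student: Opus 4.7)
We show every separable ultrametric space $M$ with at least three points fails to be linearly rigid; spaces with at most two points are linearly rigid for trivial reasons and may be excluded. The plan is to exhibit an isometric embedding $\iota : M \to X$ with $\iota(0) = 0$ into some Banach space $X$ for which the canonical linear extension $\widehat{\iota} : \F(M) \to X$ from Lemma \ref{l:basicFreeSpace} is not an isometry, directly negating linear rigidity.

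We treat first a three-point subspace $N = \{0, x, y\} \subset M$; after rescaling we may assume $d(0, x) = d(0, y) = 1 \ge s := d(x, y) > 0$. Define
\[
\iota_0 : N \to \ell_1^2, \qquad \iota_0(x) := (1, 0), \qquad \iota_0(y) := \bigl(1 - s/2,\, s/2\bigr).
\]
A direct calculation shows $\iota_0$ is an isometric embedding with linearly dense image, so $\widehat{\iota_0} : \F(N) \to \ell_1^2$ is a surjective linear map of norm $1$ between two-dimensional spaces, hence a linear bijection. Were it an isometry, $\F(N)$ would be isometric to $\ell_1^2$, contradicting the sub-claim proved in the preceding remark that the free space over a three-point ultrametric space is never isometric to $\ell_1^2$. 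Thus $\widehat{\iota_0}$ is not an isometry.

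To treat a general $M$, we form the Banach-space pushout of $\widehat{\iota_0} : \F(N) \to \ell_1^2$ along the isometric inclusion $\F(N) \hookrightarrow \F(M)$: explicitly,
\[
X := \bigl(\ell_1^2 \oplus_1 \F(M)\bigr)/W, \qquad W := \ov{\sspan}\bigl\{\bigl(\iota_0(n),\, -\delta_M(n)\bigr) \setsep n \in N\bigr\},
\]
and we set $\iota(m) := [(0, \delta_M(m))]$. A short computation with the quotient norm shows that the natural map $\ell_1^2 \hookrightarrow X$ given by $y \mapsto [(y, 0)]$ is isometric, and that $\iota|_N = \iota_0$ under this identification. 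Granting isometry of $\iota$ on $M$, the restriction of $\widehat{\iota}$ to $\F(N) \subset \F(M)$ equals $\widehat{\iota_0}$, which is not an isometry; hence $\widehat{\iota}$ is also not an isometry, so $M$ is not linearly rigid.

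The main technical step is verifying $\iota$ is isometric on all of $M$. The upper bound $\|\iota(m) - \iota(m')\|_X \le d(m, m')$ is immediate from the defining infimum (take $z = 0$). The matching lower bound, via Hahn-Banach duality in $X^*$, reduces to showing that for every pair $m, m' \in M$ the supremum of $|f(m) - f(m')|$ over $1$-Lipschitz $f : M \to \er$ with $f(0) = 0$ subject to the compatibility constraint ``$f|_N = \psi \circ \iota_0$ for some $\psi \in \ell_\infty^2$ with $\|\psi\|_\infty \le 1$'' (amounting to the single inequality $|f(y) - (1 - s/2) f(x)| \le s/2$) equals $d(m, m')$. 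One constructs such an $f$ via McShane extension from a finite subset containing $\{0, x, y, m, m'\}$, adjusting the values $f(x), f(y)$ to meet the compatibility constraint while preserving the prescribed difference at $m, m'$; the needed flexibility is provided by the ultrametric structure and the closest-point retractions of Section \ref{s:monSchauder}.
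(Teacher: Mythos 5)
You take a completely different route from the paper, whose proof is three lines: by Vestfrid's theorem every separable ultrametric space embeds isometrically into $c_0$; were the space linearly rigid, this embedding would extend to a linear isometric embedding of its free space, which by Theorem \ref{t:freeUltrametric} is isomorphic to $\ell_1$ --- impossible, since $c_0$ contains no isomorphic copy of $\ell_1$. Your pushout strategy is appealing in principle (it localizes the obstruction to a three-point subspace and would even cover finite ultrametric spaces with at least three points, which the $c_0$ argument does not reach), and the soft parts are fine: $\iota_0$ is indeed isometric with dense span, $\widehat{\iota_0}$ is a norm-one bijection, the copy of $\ell_1^2$ sits isometrically in the pushout $X$, and $\widehat{\iota}\rest_{\F(N)}=\widehat{\iota_0}$ up to that isometric identification. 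But the proof as written has a genuine gap, and a secondary weakness.

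The gap is the assertion that $\iota$ is isometric on all of $M$. You correctly reduce it, via $X^*=W^\perp\subset\ell_\infty^2\oplus_\infty\Lip_0(M)$, to producing for each pair $m,m'$ a $1$-Lipschitz $f$ with $f(0)=0$, $|f(m)-f(m')|=d(m,m')$ and $|f(y)-(1-s/2)f(x)|\le s/2$ --- but you never produce it; "McShane extension, adjusting the values $f(x),f(y)$" is a description of a hope, not an argument, and this is precisely where all the work lies. The verification is delicate: the natural candidates fail (e.g.\ with $d(x,m)$ tiny, $d(y,m)=d(x,y)=s<1$ and $d(m,m')=s$, the function $s\cdot\chi_{\{z:d(z,m)<s\}}$ gives $(f(x),f(y))=(s,0)$, which violates the constraint), and in the genuinely obstructed configurations --- where the Lipschitz conditions force $f(x)-f(y)$ away from $0$, which happens exactly when $d(m,m')>d(x,m)+d(y,m')$ or its mirror --- the only admissible pairs, such as $(1,1-s)$, lie on the boundary of the strip, so there is no slack to absorb sloppiness. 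A correct completion needs the convexity of the achievable set of $(f(x),f(y))$ (so that the diagonal, which always lies in the strip, is reachable unless $f(x)-f(y)$ has forced sign) plus an explicit check of the tight configurations; none of this is in your text, and the "closest-point retractions of Section \ref{s:monSchauder}" play no identifiable role. Secondarily, your contradiction rests on the claim that the free space of a three-point ultrametric space is never isometric to $\ell_1^2$; the paper states this in the preceding remark but explicitly omits the computation as tedious, deferring to \cite{aude3}, so your argument is not self-contained where the paper's is. (Also, the two-point spaces you discard really are linearly rigid, so strictly speaking they are a counterexample to the corollary as literally stated by both you and the paper; that convention should at least be acknowledged.)
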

\begin{proof}
By \cite[Theorem 1]{vestfrid}, any separable ultrametric space isometrically embeds into $c_0$. If it were linearly rigid the embedding would extend to an embedding of the free space of this ultrametric space. However, by Theorem \ref{t:freeUltrametric} this free space is isomorphic to $\ell_1$, while it is well known that $c_0$ does not contain a copy of $\ell_1$, a contradiction.
\end{proof}

\subsection*{Acknowledgements}
We would like to thank Eva Perneck\'a for explaining us the general idea of proving the existence of a Schauder basis in a Lipschitz-free space, Aude Dalet for providing us preliminary versions of her papers and Petr H\' ajek for spotting a mistake in the first version of the paper.

The first author was supported by Warsaw Center of Matemathics and Computer Science (KNOW--MNSzW). The second author was supported by funds allocated to the implementation of the international co-funded project in the years 2014-2018, 3038/7.PR/2014/2, and by the EU grant PCOFUND-GA-2012-600415.

\def\cprime{$'$}

\end{document}